\documentclass[a4paper,10pt]{amsart}
\usepackage[shortlabels]{enumitem}
\usepackage{amsthm,amsmath,amssymb,graphics,graphicx,hyperref,epstopdf,mathrsfs}
\usepackage{mathtools}
\usepackage{breqn}      
\usepackage{tikz}  
\usetikzlibrary{shapes.geometric}

\usetikzlibrary{calc,intersections,through}
  \usepackage{capt-of}

\setlength\textwidth{5.5in}

\setlength\oddsidemargin{.5in}
\setlength\evensidemargin{.5in}

\setlength\topmargin{0in}
\setlength\textheight{8in}

\title{The ErdŐs-Faber-Lovász conjecture for weakly dense hypergraphs}
\author{Guillermo Alesandroni}
\address{2000 Rosario, Santa Fe, Argentina}
\email{guillea@okstate.edu, alesangc@wfu.edu, alesandronig@yahoo.com}

\newtheorem{theorem}{Theorem}[section]

\newtheorem{lemma}[theorem]{Lemma}

\newtheorem{conjecture}[theorem]{Conjecture}

\theoremstyle{definition}
\newtheorem{definition}[theorem]{Definition}

\DeclareMathOperator{\adj}{adj}

\begin{document}
\maketitle
\begin{abstract}
Generalizing the concept of dense hypergraph, we say that a hypergraph with n edges is weakly dense, if no $k$ in the half-open interval $[2,\sqrt{n})$ is the degree of more than $k^2$ vertices. In our main result, we prove the famous Erdős-Faber-Lovász conjecture when the hypergraph is weakly dense.
\end{abstract}

\section{Introduction}

In one of its various equivalent versions, the Erdős-Faber-Lovász (EFL) conjecture [Er] reads: a linear $n$-uniform hypergraph with $n$ edges can be colored with $n$ colors.

In his short but ingenious article [SA], A. Sanchez-Arroyo defines the term dense hypergraph, and shows that the intriguing EFL conjecture holds under the hypothesis of density. Inspired by his work, we introduce the more general notion of weakly dense hypergraph, and prove the conjecture when the property of weak density is satisfied.

The concepts of density and weak density are defined and related as follows: a hypergraph $\mathscr{H}$ with $n$ edges is 
\begin{enumerate}[(1)]
\item \textbf{dense}, if no $k$ in the interval $[2,\sqrt{n}]$ is the degree of a vertex;
\item \textbf{slightly weakly dense}, if no $k$ in the interval $[2,\sqrt{n})$ is the degree of a vertex;
\item \textbf{weakly dense}; if no $k$ in the interval $[2,\sqrt{n})$ is the degree of more than $k^2$ vertices.
\end{enumerate}

(The definition of density given in [SA] differs slightly from ours, the difference being a minor technicality. However, we favor our definition because, in its strongest form, the theorem in [SA] proves the EFL conjecture for dense hypergraphs, with our notion of density.)

Note that $\{$dense hypergraphs$\}\subseteq\{$slightly weakly dense hypergraphs$\}\subseteq\{$weakly dense hypergraphs$\}$.

At the time of this writing, the conjecture has eluded proof for about half a century, and the reason may be that it is false. The class of weakly dense hypergraphs sheds some light on how to construct a counterexample. If it exists, a counterexample must contain more than $k^2$ vertices of degree $k$, for some $k\in [2,\sqrt{n})$. 

\section{Background and Notation}

In this section, we include a brief glossary of hypergraph terminology, and we state the EFL conjecture to make the material precise and self-contained.

\begin{definition}
\end{definition}

\begin{itemize}
\item A \textbf{hypergraph} $\mathscr{H}$ is a pair $\mathscr{H}=(\mathscr{V},\mathscr{E})$, where $\mathscr{E}$ is a finite family of nonempty sets, and $\mathscr{V}=\bigcup\limits_{E\in \mathscr{E}} E$. The elements of $\mathscr{V}$ are called \textbf{vertices}, and the elements of $\mathscr{E}$ are called \textbf{edges}. Two vertices belonging to a common edge are called \textbf{adjacent}, or \textbf{incident}, and an edge $E$ that contains a vertex $v$ is said to be \textbf{incident} to $v$.
\item We say that $\mathscr{H}$ is \textbf{linear} if any two edges share at most one vertex, and is \textbf{n-uniform} if each edge has exactly $n$ vertices.
\item The \textbf{degree} in $\mathscr{H}$ of a vertex $v$ (or just the degree of $v$), denoted $d_{\mathscr{H}}(v)$ (or just $d(v)$), is the number of edges containing $v$. The \textbf{minimum degree} of $\mathscr{H}$, denoted $\delta(\mathscr{H})$, is defined as $\delta(\mathscr{H})=\min_{v\in\mathscr{V}}d_{\mathscr{H}}(v)$.
\item A hypergraph $\mathscr{H}$ with $n$ edges is called \textbf{dense}, if no integer $k$ in the closed interval $[2,\sqrt{n}]$ is the degree of a vertex of $\mathscr{V}$. In other words, $\mathscr{H}$ is dense if $d(v)=1$ or $d(v)>\sqrt{n}$, for all $v\in\mathscr{V}$.
\item If $\mathscr{H}=(\mathscr{V},\mathscr{E})$, and $\mathscr{V}'\subseteq \mathscr{V}$, we define a \textbf{$k$-coloring} of $\mathscr{V}'$ as a function $f:\mathscr{V}' \rightarrow \{0,\ldots,k-1\}$, such that if $v,v' \in \mathscr{V}' \cap E$ for some edge $E\in\mathscr{E}$, then $f(v)\neq f(v')$. If a k-coloring $f$ of $\mathscr{V}'$ exists, the set $\{0,\ldots,k-1\}$ is referred to as a set of colors; a vertex $v\in \mathscr{V}'$ is said to be colored with the color $f(v)$, and we say that $\mathscr{V}'$ can be colored with $k$ colors. In particular, when $\mathscr{V}' = \mathscr{V}$, $f$ is called a \textbf{$k$-coloring} of $\mathscr{H}$, and we say that $\mathscr{H}$ can be \textbf{colored} with $k$ colors.
\item The \textbf{chromatic number} of $\mathscr{H}$, denoted $\chi(\mathscr{H})$, is the smallest value of $k$ such that a $k$-coloring of $\mathscr{H}$ exists. 
\end{itemize}

There is no universally accepted definition of $k$-coloring of a hypergraph, which explains the need for this glossay. The definition of $k$-coloring of our choice is the one that best suits our formulation of EFL conjecture. The renowned EFL conjecture states the following.

\begin{conjecture}
If $\mathscr{H}$ is a linear $n$-uniform hypergraph with $n$ edges, then $\chi(\mathscr{H})=n$.
\end{conjecture}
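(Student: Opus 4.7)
The lower bound $\chi(\mathscr{H}) \ge n$ is immediate, since the $n$ vertices of any single edge must be pairwise colored differently. The content of the conjecture is the matching upper bound $\chi(\mathscr{H}) \le n$, which is what the rest of this plan addresses. I would proceed by strong induction on the number of edges $n$, with small base cases verified directly.

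For the inductive step, I would stratify $\mathscr{V}$ by degree into heavy vertices $V_H = \{v : d(v) > \sqrt{n}\}$, medium vertices $V_M = \{v : 2 \le d(v) \le \sqrt{n}\}$, and light vertices $V_L = \{v : d(v) = 1\}$, and build the $n$-coloring in three stages. The heavy stage exploits linearity: two heavy vertices can share at most one edge, and a counting argument using $\sum_v d(v) = n^2$ caps the number of heavy vertices inside any single edge at roughly $\sqrt{n}$. In the spirit of Sanchez-Arroyo's argument for the dense case, I would color $V_H$ by setting up a bipartite auxiliary graph between heavy vertices and the $n$-color palette and applying Hall's theorem (or a list-coloring strengthening). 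The light stage is essentially free, since each light vertex belongs to a single edge and can always be assigned some color not yet used on that edge. For the medium stage I would attempt an augmenting-path / Kempe-chain recoloring procedure: whenever a direct color assignment is blocked, swap colors along a chain of intersecting edges to free one, and use linearity to argue that the chains do not branch uncontrollably.

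The main obstacle is unambiguously the medium stage. A medium vertex $v$ can have as many as $d(v)(n-1)$ neighbors, far more than the $n$ colors available, so pure greedy coloring must fail; and unlike the heavy case, no automatic rigidity constrains the coloring. The controlling quantity is $|V_M|$: when it is small (for example, under a weak density hypothesis, where $|V_M| \le \sum_{2 \le k < \sqrt{n}} k^2$), one can hope to absorb medium vertices into an already-chosen coloring of the rest; in the fully unconditional setting, $|V_M|$ can be arbitrarily large, and controlling the global propagation of recolorings becomes the crux of the matter. I expect any attack on the full conjecture to require either an intricate absorption scheme, or a probabilistic coloring step (Rödl nibble or Lovász Local Lemma) exploiting the bounded codegree that linearity provides; absent such machinery, the argument can only be made to close under an auxiliary hypothesis such as weak density, which is the route taken by the present paper.
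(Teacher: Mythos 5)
You have not given a proof, and to your credit you say so yourself. The statement you were asked to prove is the full Erd\H{o}s--Faber--Lov\'asz conjecture, which the paper also does not prove: it is stated as a conjecture, and the paper's actual result (Theorem \ref{EFL}) establishes it only under the added hypothesis of weak density. The genuine gap in your plan is exactly where you locate it, the medium-degree stage. A vertex $v$ with $2\le d(v)<\sqrt{n}$ can have up to $d(v)(n-1)>n$ colored neighbors, so greedy coloring fails, and your proposed Kempe-chain/augmenting-path repair comes with no argument that the chains terminate or that a free color is ever produced; no such argument is known, and this is precisely the open part of the conjecture. Your fallback --- that the argument closes under weak density --- is what the paper does: for a medium vertex $v_r$ it counts already-colored neighbors edge by edge, arriving at the bound $n-d(v_r)+i/d(v_r)$, where $i$ is the number of already-colored neighbors of the same degree, and weak density forces $i<d(v_r)^2$, hence fewer than $n$ colored neighbors and a free color. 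Note that the controlling quantity is not $|V_M|$ globally but the number of vertices of each fixed degree $k$, which weak density caps at $k^2$; your summed bound $\sum_{2\le k<\sqrt{n}}k^2$ is not what is used.

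Where your plan overlaps with the paper's conditional proof, the architecture agrees: the same three-way stratification by degree (the paper's $\mathscr{V}_1,\mathscr{V}_2,\mathscr{V}_3$, with threshold $d(v)\ge\sqrt{n}$ rather than $d(v)>\sqrt{n}$), and the same observation that degree-one vertices can be colored for free at the end. But your heavy stage is also underdeveloped: you invoke Hall's theorem on a vertex--color bipartite graph without verifying Hall's condition, which is not routine. The paper instead colors the high-degree part greedily in order of decreasing degree, using the bound $d(v)\frac{n-d(v)}{d(v)-1}<n$ when $d(v)>\sqrt{n}$, and must treat the boundary case $d(v)=\sqrt{n}$ with $|\adj(v)|=n$ separately via a rigid, projective-plane-like configuration (Lemma \ref{Lemma 1'} and Theorem \ref{Theorem 1}); that boundary case is a real difficulty your sketch does not anticipate. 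Finally, strong induction on $n$ plays no role in the paper's argument, and it is unclear what smaller uniform linear hypergraph the inductive hypothesis would be applied to, so the inductive frame as stated does not obviously buy anything.
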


\section{Main Results}

The EFL conjecture for weakly dense hypergraphs relies on a series of lemmas and theorems that we consider below.

\begin{lemma}\label{Lemma 1}
Let $\mathscr{H} = (\mathscr{V},\mathscr{E})$ be a linear hypergraph with at most $n$ edges, each with at most $n$ vertices. Suppose that $\delta(\mathscr{H})\geq \sqrt{n}$, and let $E\in \mathscr{E}$. Then,
\begin{enumerate}[(i)]
\item $\mid E\mid\leq \sqrt{n}+1$,
\item $\mid E\mid <\sqrt{n}+1$ if, for some $v \in E$,  $d(v)\geq \sqrt{n}+1$.
\end{enumerate}
\end{lemma}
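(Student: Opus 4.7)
My plan is a standard double-counting argument using the linearity hypothesis. Write $E=\{v_1,\dots,v_m\}$ with $m=|E|$. The idea is to count, for each $v_i$, the edges of $\mathscr{H}$ other than $E$ passing through $v_i$; linearity makes these counts add up without overlap.

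More precisely, for each $i$ set $\mathscr{E}_i=\{E'\in\mathscr{E}:E'\neq E,\ v_i\in E'\}$. Since any two distinct edges of a linear hypergraph share at most one vertex, no edge $E'\neq E$ can contain two of the vertices $v_i$; hence the collections $\mathscr{E}_1,\dots,\mathscr{E}_m$ are pairwise disjoint. By the minimum-degree hypothesis, $|\mathscr{E}_i|=d(v_i)-1\geq\sqrt{n}-1$ for every $i$. Counting $E$ together with the edges in the disjoint union gives
\[
1+\sum_{i=1}^{m}|\mathscr{E}_i|\ \leq\ |\mathscr{E}|\ \leq\ n,
\]
so $m(\sqrt{n}-1)\leq n-1=(\sqrt{n}-1)(\sqrt{n}+1)$, and assuming $n\geq 2$ (the case $n=1$ being trivial, as then $\mathscr{H}$ has a single edge) I divide by $\sqrt{n}-1>0$ to obtain $m\leq\sqrt{n}+1$. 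This is (i).

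For (ii), the refinement is to replace the generic lower bound $\sqrt{n}-1$ by the sharper $\sqrt{n}$ at the vertex $v\in E$ with $d(v)\geq\sqrt{n}+1$. The sum then satisfies $\sum_i|\mathscr{E}_i|\geq(m-1)(\sqrt{n}-1)+\sqrt{n}$, and combining with $\sum_i|\mathscr{E}_i|\leq n-1=(\sqrt{n}-1)(\sqrt{n}+1)$ yields $(m-1)(\sqrt{n}-1)\leq(\sqrt{n}-1)\sqrt{n}-1$. Dividing once more by $\sqrt{n}-1>0$ gives $m-1<\sqrt{n}$, i.e.\ $m<\sqrt{n}+1$, which is (ii).

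I do not anticipate a serious obstacle: the entire argument is pigeonhole/linearity plus an arithmetic manipulation. The only minor care point is to justify dividing by $\sqrt{n}-1$, which forces the separate (and trivial) treatment of $n=1$; note that the hypothesis ``each edge has at most $n$ vertices'' is not used in the counting, but is consistent with $|E|\leq\sqrt{n}+1\leq n$ for $n\geq 2$.
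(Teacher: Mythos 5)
Your proof is correct and is essentially the paper's own argument: both exploit linearity to see that the edges through distinct vertices of $E$ (other than $E$ itself) form pairwise disjoint families, each of size at least $\sqrt{n}-1$, and then compare the resulting count with the bound $|\mathscr{E}|\leq n$, with the refinement for (ii) coming from the single vertex of degree at least $\sqrt{n}+1$. The paper organizes the same count as an induction on the number of edges avoiding $v,c_1,\dots,c_i$, whereas you sum the disjoint families directly (and, unlike the paper, you explicitly flag the division by $\sqrt{n}-1$ and the degenerate case $n=1$), but the substance is identical.
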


\begin{proof}
Let $E=\{v,c_1,\ldots,c_k\}$. We will show that $k\leq\sqrt{n}$. By the linearity of $\mathscr{H}$, there is an edge (namely, $E$), that contains both $v$ and $c_1$, and there are at least $\sqrt{n}-1$ edges containing $c_1$ but not $v$. These edges are among the at most $n-\sqrt{n}$ edges not incident to $v$. Hence, there are at most $(n-\sqrt{n})-(\sqrt{n}-1)$ edges not containing either of $c_1$ and $v$ (and this upper bound is not reached if $ d(v)>\sqrt{n}$ or $d(c_1)>\sqrt{n}$). Let $2\leq i \leq k$, and suppose that the number of edges not containing any of $v,c_1,\ldots,c_{i-1}$ is at most $(n-\sqrt{n})-(i-1)(\sqrt{n}-1)$ (and this upper bound is not reached if one of $v,c_1,\ldots,c_{i-1}$ has degree larger than $\sqrt{n}$). By the linearity of $\mathscr{H}$, there is one edge (namely, $E$), containing each of $v,c_1,\ldots,c_i$ and at least $\sqrt{n}-1$ edges containing $c_i$ but not contining any of $v,c_1,\ldots,c_{i-1}$. Therefore, the number of edges not containing any of $v,c_1,\ldots,c_i$ is at most $(n-\sqrt{n})-i(\sqrt{n}-1)$ (and this upper bound is not reached if one of $v,c_1,\ldots,c_i$ has degree larger than $\sqrt{n}$). By induction on $i$, we have proved that the number of edges not containing any of $v,c_1,\ldots,c_k$ is at most $(n-\sqrt{n})-k(\sqrt{n}-1)$ (and this upper bound is not reached if any of $v,c_1,\ldots,c_k$ has degree larger than $\sqrt{n}$). Therefore, $k\leq\sqrt{n}$ (and $k<\sqrt{n}$ if any of $v,c_1,\ldots,c_k$ has degree $\geq \sqrt{n}$).

(i) Since $k\leq \sqrt{n}$, $\mid E\mid =k+1\leq \sqrt{n}+1$.

(ii) If one of $v,c_1,\ldots,c_k$ has degree larger than $\sqrt{n}$, then $k<\sqrt{n}$, and $\mid E\mid =k+1<\sqrt{n}+1$.
\end{proof}

\begin{definition}
We define the \textbf{adjacency} $\adj(v)$ \textbf{of a vertex} $v$ of $\mathscr{H}$ as 
\[\adj(v)=\{v'\in \mathscr{V}: v' \text{ is adjacent to }v\}.\] 
\end{definition}

\begin{lemma}\label{Lemma 1'}
Let $\mathscr{H}=(\mathscr{V},\mathscr{E})$ be a linear hypergraph with at most $n$ edges, each with at most $n$ vertices. Suppose that $\delta(\mathscr{H})\geq \sqrt{n}$. Suppose, in addition, that there is a vertex $v\in \mathscr{V}$, such that $d(v)=\sqrt{n}$, and $\mid \adj(u)\mid =n$ for all $u\in \adj(v)\cup\{v\}$. Then,
\begin{enumerate}[(i)]
\item $\mid \mathscr{E} \mid = n$;
\item $\mid E\mid=\sqrt{n}+1$, for all $E\in \mathscr{E}$;
\item $d(u)=\sqrt{n}$, for all $u \in \mathscr{V}$;
\item every pair of edges share exactly one vertex;
\item if $E\in \mathscr{E}$, and $u\in \mathscr{V}\setminus E$, there is exactly one vertex in $E$ not adjacent to $u$.
\end{enumerate}
\end{lemma}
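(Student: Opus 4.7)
The plan is to bootstrap from Lemma~\ref{Lemma 1}. Lemma~\ref{Lemma 1}(i) bounds each of the $\sqrt{n}$ edges through $v$ by $\sqrt{n}+1$ vertices, and linearity forces the non-$v$ vertices across these edges to be distinct, so $|\adj(v)|\leq\sqrt{n}\cdot\sqrt{n}=n$. The hypothesis pins down equality, so every edge through $v$ has exactly $\sqrt{n}+1$ vertices. Lemma~\ref{Lemma 1}(ii) then forbids any vertex on such an edge from having degree $>\sqrt{n}$, and combined with $\delta(\mathscr{H})\geq\sqrt{n}$ every $u\in\adj(v)\cup\{v\}$ has degree exactly $\sqrt{n}$. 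Running the same argument with any such $u$ in place of $v$ (using $d(u)=\sqrt{n}$ and $|\adj(u)|=n$) extends the conclusion: every edge through any $u\in\adj(v)\cup\{v\}$ has size $\sqrt{n}+1$, and every vertex on it has degree $\sqrt{n}$.

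Next I would double-count incidences between $\adj(v)\cup\{v\}$ and $\mathscr{E}$:
\[(n+1)\sqrt{n}=\sum_{u\in\adj(v)\cup\{v\}}d(u)=\sum_{E\in\mathscr{E}}|E\cap(\adj(v)\cup\{v\})|.\]
The $\sqrt{n}$ edges through $v$ contribute $\sqrt{n}(\sqrt{n}+1)=n+\sqrt{n}$, leaving $n(\sqrt{n}-1)$ for the non-$v$ edges. Linearity against the $\sqrt{n}$ edges through $v$ caps $|E\cap\adj(v)|\leq\sqrt{n}$ for a non-$v$ edge $E$, and the hypothesis gives at most $n-\sqrt{n}$ non-$v$ edges. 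Since $(n-\sqrt{n})\cdot\sqrt{n}=n(\sqrt{n}-1)$, both bounds must be attained simultaneously. This yields (i) $|\mathscr{E}|=n$ and shows every non-$v$ edge meets $\adj(v)$ in exactly $\sqrt{n}$ vertices; applying the first paragraph to any $u\in E\cap\adj(v)$ then gives $|E|=\sqrt{n}+1$, proving (ii).

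For (iii), every edge now contains a vertex of $\adj(v)\cup\{v\}$, so every vertex of $\mathscr{V}$ lies on an edge through a vertex of $\adj(v)\cup\{v\}$ and therefore has degree $\sqrt{n}$. A global incidence count then forces $|\mathscr{V}|=n+\sqrt{n}$. For (iv), I would compute
\[\sum_{u\in\mathscr{V}}\binom{d(u)}{2}=(n+\sqrt{n})\binom{\sqrt{n}}{2}=\binom{n}{2}=\binom{|\mathscr{E}|}{2},\]
and since linearity caps each pair-intersection at $1$, the only way to reach this sum is for every pair of edges to share exactly one vertex. For (v), given $E\in\mathscr{E}$ and $u\in\mathscr{V}\setminus E$, (iv) says each of the $\sqrt{n}$ edges through $u$ meets $E$ in exactly one vertex; linearity makes these $\sqrt{n}$ intersection vertices distinct, so they exhaust $\sqrt{n}$ of the $\sqrt{n}+1$ vertices of $E$, leaving exactly one non-adjacent to $u$.

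The main obstacle is the double-count in the second paragraph: the simultaneous saturation of the linearity bound $|E\cap\adj(v)|\leq\sqrt{n}$ and the edge-count bound $|\mathscr{E}|\leq n$ is what upgrades the local information from the first paragraph into the rigid global design structure. Once (i) and (ii) are secured, (iii)--(v) follow from short counting arguments.
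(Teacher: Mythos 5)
Your proof is correct. It shares the paper's skeleton --- bootstrap Lemma \ref{Lemma 1} at $v$ and then at each $u\in\adj(v)$, so that every edge through a vertex of $\adj(v)\cup\{v\}$ has $\sqrt{n}+1$ vertices, all of degree $\sqrt{n}$ --- but the counting that converts this local rigidity into the global claims is genuinely reorganized. For (i), the paper fixes one edge $E_i=\{v,v_{i1},\ldots,v_{i\sqrt{n}}\}$ through $v$ and counts, via linearity, the edges incident to at least one of its vertices, arriving at exactly $n$; you instead double-count incidences between $\adj(v)\cup\{v\}$ and $\mathscr{E}$, and the squeeze against the two caps $\mid\mathscr{E}\mid\leq n$ and $\mid E\cap\adj(v)\mid\leq\sqrt{n}$ yields (i) together with the extra fact that every edge meets $\adj(v)\cup\{v\}$, which you then reuse for (ii) and (iii); the paper instead gets (ii) from the explicit decomposition $\mathscr{E}=\bigcup_j\{E_i,E_{ij2},\ldots,E_{ij\sqrt{n}}\}$ and (iii) by separately bounding $d(u)$ for $u$ outside $\bigcup_i E_i$. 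The sharpest divergence is (iv): the paper counts the edges meeting a fixed edge $E$, namely $(\sqrt{n}+1)\sqrt{n}-\sqrt{n}=n$, so every edge meets $E$ and linearity finishes; you compute $\sum_{u}\binom{d(u)}{2}=(n+\sqrt{n})\binom{\sqrt{n}}{2}=\binom{n}{2}$ and let linearity force every pair of edges to intersect --- a slicker, more symmetric argument, at the small cost of first establishing $\mid\mathscr{V}\mid=n+\sqrt{n}$. Part (v) is argued identically in both. Each route is complete; yours trades the paper's hands-on, edge-anchored inclusion--exclusion for two clean global double counts.
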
 

\begin{proof}
Let $\{E_1,\ldots,E_{\sqrt{n}}\}$ be the class of all edges incident to $v$. According to Lemma \ref{Lemma 1}(i), $\mid E_i \setminus \{v\}\mid \leq \sqrt{n}$, for each $i$. By hypothesis, $\mid \adj(v)\mid = n$, which implies that $\mid E_i\setminus\{v\}\mid = \sqrt{n}$. Thus, $\mid E_i\mid=\sqrt{n}+1$ and, by Lemma \ref{Lemma 1}(ii), each vertex of $E_i$ has degree $\sqrt{n}$. Thus, each $E_i$ can be represented as
$E_i=\{v,v_{i1},\ldots,v_{i\sqrt{n}}\}$, where $d(v_{ij})=\sqrt{n}$, and $\mid\adj(v_{ij})\mid = n$.
Since $d(v_{ij})=\sqrt{n}$, and $E_i$ is incident to $v_{ij}$, the class of all edges incident to $v_{ij}$ can be expressed as $\{E_i,E_{ij2},\ldots,E_{ij\sqrt{n}}\}$. In addition, the fact that $\mid \adj(v_{ij})\mid=n$, combined with Lemma \ref{Lemma 1}(i), implies that
\[\mid E_i\setminus\{v_{ij}\}\mid = \mid E_{ij2}\setminus\{v_{ij}\}\mid = \cdots =\mid E_{ij\sqrt{n}}\setminus\{v_{ij}\}\mid = \sqrt{n}.\]
By the linearity of $\mathscr{H}$, if $1\leq j<k\leq \sqrt{n}$, the class of the $\sqrt{n}$ edges incident to $v_{ij}$, and the class of the $\sqrt{n}$ edges incident to $v_{ik}$, share exactly one edge (namely, $E_i$). Hence, the number of edges of $\mathscr{E}$, incident to one of $v_{i1},\ldots,v_{i\sqrt{n}}$ is $\sqrt{n}\sqrt{n} - (\sqrt{n}-1)$. And the number of edges incident to one of $v,v_{i1},\ldots,v_{i\sqrt{n}}$ is $\sqrt{n}\sqrt{n} - (\sqrt{n}-1) + (\sqrt{n}-1) = n$. Since $\mid \mathscr{E}\mid\leq n$, we must have that $\mid \mathscr{E}\mid = n$, which proves (i). 

In addition,
\[\mathscr{E} = \bigcup\limits_{j=1}^{\sqrt{n}} \{E_i,E_{ij2},\ldots,E_{ij\sqrt{n}}\}.\]
Since $\mid E_i\mid = \mid E_{ij2}\mid = \cdots = \mid E_{ij\sqrt{n}}\mid = \sqrt{n}+1$, we conclude that $\mid E \mid = \sqrt{n}+1$, for all $E\in \mathscr{E}$, which proves (ii).

Let $u\in\mathscr{V}$. If for some $i$, $u\in E_i$, then $d(u) = \sqrt{n}$. Now, suppose that $u\in \mathscr{V}\setminus \bigcup\limits_{i=1}^{\sqrt{n}} E_i$. Then $u$ is not adjacent to $v$. Since each edge incident to $u$ must also be incident to one of $v_{i1},\ldots,v_{i\sqrt{n}}$, and given that no $v_{ij}$ can be shared by two edges incident to $u$, it follows that $d(u) \leq \sqrt{n}$. But, by hypothesis, $d(u)\geq \sqrt{n}$. Therefore, $d(u)=\sqrt{n}$, which proves (iii).

Let $E\in \mathscr{E}$. By (ii), $\mid E\mid = \sqrt{n}+1$, and by (iii), each of the $\sqrt{n}+1$ vertices of $E$ has degree $\sqrt{n}$. Therefore, the number of edges incident to at least one vertex of $E$ is $(\sqrt{n}+1)\sqrt{n} - \sqrt{n} = n$. In other words, each of the $n$ edges of $\mathscr{E}$ intersects $E$ and, by linearity, no edge in $\mathscr{E}\setminus \{E\}$ can intersect $E$ in more than one vertex. Hence, if $E'\in \mathscr{E}\setminus\{E\}$, $E$ and $E'$ must share exactly one vertex, which proves (iv).

Finally, if $E\in\mathscr{E}$, and $u\in \mathscr{V}\setminus E$, each of the $\sqrt{n}$ edges incident to $u$ must intersect $E$ in exactly one vertex. By linearity, no vertex of $E$ can be shared by two edges incident to $u$. It follows that exactly $\sqrt{n}$ of the $\sqrt{n}+1$ vertices of $E$ are adjacent to $u$, and exactly one of them is not, which proves (v).
\end{proof}

\begin{theorem}\label {Theorem 1}
Let $\mathscr{H} = (\mathscr{V},\mathscr{E})$ be a linear hypergraph with at most $n$ edges, each with at most $n$ vertices. Suppose that $\delta(\mathscr{H})\geq \sqrt{n}$. Suppose, in addition, that there is a vertex $v\in\mathscr{V}$, such that $d(v) = \sqrt{n}$, and $\mid\adj(u)\mid = n$, for all $u\in \adj(v)\cup\{v\}$. Then $\mathscr{H}$ is $n$-colorable.
\end{theorem}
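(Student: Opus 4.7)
The plan is to use the rigid structure given by Lemma \ref{Lemma 1'} to produce a proper coloring using only $\sqrt{n}+1$ colors, which is at most $n$. Writing $q=\sqrt{n}$, Lemma \ref{Lemma 1'} tells us that $\mathscr{H}$ has $q^{2}$ edges of size $q+1$, every vertex has degree $q$, every pair of edges meets in exactly one vertex, and for every edge $E$ and every $u\notin E$ exactly one vertex of $E$ fails to be adjacent to $u$. The hypotheses force $q\ge 2$ (if $q=1$ then every edge has size $\sqrt{n}+1=2$, contradicting the assumption that each edge has at most $n=1$ vertices), so $q+1\le q^{2}=n$; hence exhibiting a proper $(q+1)$-coloring will suffice.

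Here is the construction I propose. Fix any edge $E=\{u_0,u_1,\ldots,u_q\}$, and for each $i\in\{0,1,\ldots,q\}$ define the ``parallel class''
\[
 P_i \;=\; \{u_i\}\,\cup\,\{\,w\in\mathscr{V}\setminus E : u_i \text{ is the unique vertex of } E \text{ not adjacent to } w\,\}.
\]
By Lemma \ref{Lemma 1'}(v), every vertex $w\in\mathscr{V}\setminus E$ lies in exactly one $P_i$, so $\{P_0,P_1,\ldots,P_q\}$ is a partition of $\mathscr{V}$. The proposed coloring simply assigns color $i$ to every vertex of $P_i$.

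The main obstacle, and essentially the only real step, is to show that each $P_i$ is an independent set of $\mathscr{H}$; granted this, the assignment is automatically a proper $(q+1)$-coloring and we are done. If one of two vertices lying in $P_i$ is $u_i$ itself, independence is immediate from the definition of $P_i$. So suppose for contradiction that $w_1,w_2\in P_i\setminus\{u_i\}$ lie on a common edge $F$. Since $w_1,w_2\notin E$, we have $F\ne E$, and Lemma \ref{Lemma 1'}(iv) gives $F\cap E=\{u_j\}$ for a unique $j$; the fact that $u_i\notin F$ (because $u_i$ is non-adjacent to $w_1$) forces $j\ne i$. But then $F$ is an edge not containing $u_i$ in which two distinct vertices $w_1,w_2$ are both non-adjacent to $u_i$, contradicting the uniqueness part of Lemma \ref{Lemma 1'}(v) applied to $F$ and $u_i$. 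This contradiction establishes independence of each $P_i$, completes the $(q+1)$-coloring, and hence proves the theorem.
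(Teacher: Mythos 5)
Your proposal is correct and follows essentially the same route as the paper: both fix an edge $E$ of size $\sqrt{n}+1$, partition $\mathscr{V}$ into the $\sqrt{n}+1$ classes of vertices non-adjacent to (or equal to) each $v_i\in E$ using Lemma \ref{Lemma 1'}(v), and then use the uniqueness assertion of Lemma \ref{Lemma 1'}(v) again to show each class is independent, yielding a $(\sqrt{n}+1)$-coloring. The only cosmetic differences are your contradiction framing of the independence step (with an incidental appeal to Lemma \ref{Lemma 1'}(iv)) and your explicit check that $\sqrt{n}+1\leq n$, which the paper leaves implicit.
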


\begin{proof}
Let $E\in\mathscr{E}$. By Lemma \ref{Lemma 1'}(ii), $E$ has cardinality $\sqrt{n}+1$, and can be represented as
$E = \{v_0,\ldots,v_{\sqrt{n}}\}$. For each $i$, let 
\[E_i = \{u\in\mathscr{V}: u \text{ is not adjacent to }v_i\} \cup \{v_i\}.\]
We will prove that $\mathscr{V}$ is the disjoint union of $E_0,\ldots,E_{\sqrt{n}}$. This amounts to showing that each vertex of $\mathscr{V}$ belongs to exactly one of $E_0,\ldots,E_{\sqrt{n}}$. Let $u\in E$. Then $u = v_i$ for some $i$, and thus, $u\in E_i$. In addition, for all $j\neq i$, $u\notin E_j$, for $v_i$ is adjacent to $v_j$. Now, let $u\in \mathscr{V}\setminus E$. By Lemma \ref{Lemma 1'}(v), there is exactly one vertex of $E$ not adjacent to $u$; call it $v_i$. Hence, $u\in E_i$, and $u\notin E_j$ for all $j\neq i$.

Consider the function $f:\mathscr{V}\rightarrow\{0,\ldots,\sqrt{n}\}$, defined by $f(u) = \sum\limits_{i=0}^{\sqrt{n}} i \chi_{E_i}(u)$ (where $\chi_{E_i}:\mathscr{V}\rightarrow \{0,1\}$ is the characteristic function of $E_i$). We will show that $f$ is a $(\sqrt{n}+1)$-coloring of $\mathscr{H}$. Suppose that two different vertices $u,u'$ have the same image under $f$, say $f(u) = f(u') = i$. Then, neither $u$ nor $u'$ is adjacent to $v_i$, and at least one of them, say $u$, is different from $v_i$. Let $D$ be an edge incident to $u$. Since $v_i\notin D$, it follows from Lemma \ref{Lemma 1'}(v) that exactly one vertex of $D$ (namely, $u$) is not adjacent to $v_i$. Therefore, $u'\notin D$. We have proven that no edge contains $u$ and $u'$ simultaneously; that is, $u$ is not adjacent to $u'$. We conclude that $f$ is a $(\sqrt{n}+1)$-coloring and hence, an $n$-coloring of $\mathscr{H}$.
\end{proof}

\textit{Note}: the proof of Theorem \ref{Theorem 1} shows that $\mathscr{H}$ admits a $(\sqrt{n}+1)$-coloring and, since each edge of the hypergraph has $\sqrt{n}+1$ vertices, $\chi(\mathscr{H})=\sqrt{n}+1$.

\begin{theorem}\label{Coloring}
Let $\mathscr{H} = (\mathscr{V},\mathscr{E})$ be a linear hypergraph with at most $n$ edges, each with at most $n$ vertices. Suppose that $\delta(\mathscr{H})\geq \sqrt{n}$. Then $\mathscr{H}$ admits an $n$-coloring.
\end{theorem}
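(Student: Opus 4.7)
My plan is to argue by strong induction on $|\mathscr{V}|$. The base case $|\mathscr{V}|\leq n$ is immediate: assigning a distinct color to each vertex produces a valid $n$-coloring. For the inductive step I would assume $|\mathscr{V}|>n$ and split into two cases.

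The first case is a greedy reduction: if some $u\in\mathscr{V}$ satisfies $|\adj(u)|<n$, I would form $\mathscr{H}'$ by deleting $u$ (removing $u$ from every edge and discarding any edge that becomes empty). Since the degrees of the surviving vertices are unchanged, $\mathscr{H}'$ still has at most $n$ edges of size at most $n$ with $\delta(\mathscr{H}')\geq\sqrt{n}$, so by induction it is $n$-colorable; the coloring extends to $u$ because fewer than $n$ colors are forbidden there.

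In the remaining case $|\adj(u)|\geq n$ for every $u\in\mathscr{V}$, and my plan is to reduce directly to Theorem \ref{Theorem 1} by exhibiting a vertex $v$ with $d(v)=\sqrt{n}$ and $|\adj(w)|=n$ for every $w\in\adj(v)\cup\{v\}$. To locate $v$, suppose to the contrary that $d(w)\geq\sqrt{n}+1$ for every $w$; then Lemma \ref{Lemma 1}(ii) forces $|E|\leq\sqrt{n}$ for every edge, so
\[
(\sqrt{n}+1)|\mathscr{V}| \;\leq\; \sum_{w\in\mathscr{V}} d(w) \;=\; \sum_{E\in\mathscr{E}} |E| \;\leq\; n\sqrt{n},
\]
which yields $|\mathscr{V}|<n$, contradicting $|\mathscr{V}|>n$. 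Hence some $v$ has $d(v)=\sqrt{n}$. By linearity and Lemma \ref{Lemma 1}(i), $|\adj(v)|=\sum_{E\ni v}(|E|-1)\leq d(v)\sqrt{n}=n$, and the case assumption forces equality; this in turn forces every edge through $v$ to have exactly $\sqrt{n}+1$ vertices. For any $w\in\adj(v)$, $w$ lies in such an edge, so Lemma \ref{Lemma 1}(ii) gives $d(w)\leq\sqrt{n}$, hence $d(w)=\sqrt{n}$, and the same bound then yields $|\adj(w)|=n$. All hypotheses of Theorem \ref{Theorem 1} are satisfied, and it provides the desired $n$-coloring.

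The main obstacle is the second case, where one must convert the failure of the greedy reduction into the rigid structure demanded by Theorem \ref{Theorem 1}. The decisive tool is the double-counting inequality above, which rules out the possibility that every vertex has degree strictly greater than $\sqrt{n}$ once $|\mathscr{V}|>n$; once a vertex of degree exactly $\sqrt{n}$ is available, two further applications of Lemma \ref{Lemma 1} suffice to push the remaining hypotheses of Theorem \ref{Theorem 1} into place.
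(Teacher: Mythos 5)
Your argument is correct (granting the paper's standing convention that $\sqrt{n}$ is treated as an integer, which Theorem \ref{Theorem 1} already presupposes), but it takes a genuinely different route in the non-rigid case. The paper also funnels the rigid situation into Theorem \ref{Theorem 1}, but it detects that situation purely by hypothesis (``such a $v$ exists'') and otherwise colors all of $\mathscr{V}$ greedily in order of decreasing degree, breaking ties by decreasing $|\adj(\cdot)|$; the key estimate there is that a vertex $v_i$ with $d(v_i)>\sqrt{n}$ has at most $d(v_i)\frac{n-d(v_i)}{d(v_i)-1}<n$ colored neighbors, with a separate tie-breaking argument when $d(v_i)=\sqrt{n}$ and $|\adj(v_i)|=n$. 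You instead induct on $|\mathscr{V}|$, peel off any vertex with fewer than $n$ neighbors, and when no such vertex exists you \emph{derive} the hypotheses of Theorem \ref{Theorem 1} from the double count $\sum_w d(w)=\sum_E|E|$ together with Lemma \ref{Lemma 1}. Your version buys a cleaner dichotomy (the only counting needed is the one display) and makes explicit something the paper leaves implicit: the rigid configuration of Theorem \ref{Theorem 1} is actually \emph{forced} whenever the greedy/deletion step is unavailable. The paper's version buys uniformity over non-square $n$: your step ``hence some $v$ has $d(v)=\sqrt{n}$'' only yields $\sqrt{n}\leq d(v)<\sqrt{n}+1$, which pins down $d(v)=\sqrt{n}$ only when $\sqrt{n}$ is an integer. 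This is easily repaired — for non-square $n$ the same double count with threshold $\lceil\sqrt{n}\rceil$ (all degrees are integers $\geq\sqrt{n}$, all edges have at most $\lceil\sqrt{n}\rceil$ vertices by Lemma \ref{Lemma 1}(i)) already gives $|\mathscr{V}|\leq n$, so your base case absorbs that situation — but you should say so, since Theorem \ref{Coloring} as stated does not assume $n$ is a perfect square.
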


\begin{proof}
Suppose that there is a vertex $v\in\mathscr{V}$, such that $d(v) = \sqrt{n}$, and $\mid \adj(u)\mid = n$, for all $u\in \adj(v) \cup \{v\}$. Then, by Theorem \ref{Theorem 1}, $\mathscr{H}$ is $n$-colorable. Now, suppose that no such $v$ exists.
Let $\mathscr{V} = \{v_1,\ldots,v_k\}$, where the vertices of $\mathscr{V}$ are ordered in such a way that 
\begin{enumerate}[(i)]
\item If $i<j$, then $d(v_i)\geq d(v_j)$; and
\item if $i<j$, and $d(v_i) = d(v_j)$, then $\mid \adj(v_i)\mid \geq\mid \adj(v_j)\mid$.
\end{enumerate}
We will show that $\mathscr{H}$ can be colored by induction. Color $v_1$ with any color. Let $2\leq i\leq k$, and suppose that $v_1,\ldots, v_{i-1}$ have been colored. We will consider two cases: (a) $d(v_i)>\sqrt{n}$; (b) $d(v_i) = \sqrt{n}$.\\
(a) If $E$ is an edge incident to $v_i$, and $v\in E$ is a vertex that has been colored, there is an edge (namely, $E$) incident to both $v_i$ and $v$, and there are $d(v) - 1$ ($\geq d(v_i)-1$) edges containing $v$ but not $v_i$. Note that these $d(v)-1$ edges are among the $n-d(v_i)$ edges not incident to $v_i$. Hence, the number of vertices of $E$ that have been colored is bounded above by $\dfrac{n-d(v_i)}{d(v_i) - 1}$. Since the same is true for each edge incident to $v_i$, the number of vertices adjacent to $v_i$ that have been colored is at most 
$d(v_i)\dfrac{n-d(v_i)}{d(v_i)-1}$. Since $d(v_i)>\sqrt{n}$, $d(v_i)\dfrac{n-d(v_i)}{d(v_i) - 1}<n$. Therefore, $v_i$ can be colored with one of the remaining $n-d(v_i)\dfrac{n-d(v_i)}{d(v_i) - 1}$ colors. \\
 (b) We now consider the case $d(v_i) = \sqrt{n}$. If $\mid\adj(v_i) \mid= k<n$, then at most $k$ colors have been used to color $k$ vertices of $\adj(v_i)$, and $v_i$ can be colored with any of the remaining colors. On the other hand, if $\mid\adj(v_i)\mid = n$, there must be a vertex $u$ adjacent to $v_i$, with $\mid\adj(u)\mid<n$. In addition, the fact that $d(v_i) = \sqrt{n}$, and $\mid\adj(v_i)\mid = n$, implies that each edge incident to $v_i$ must have cardinality $\sqrt{n}+1$, and by Lemma \ref{Lemma 1}(ii), each vertex adjacent to $v_i$ must have degree $\sqrt{n}$. Since $d(v_i) = d(u)$, and $\mid\adj(v_i)\mid>\mid\adj(u)\mid$, it follows that $u$ has not yet been colored. We conclude that the number of vertices adjacent to $v_i$ that have been colored is strictly less than $n$, and thus, $v_i$ can be colored with one of the remaining colors.
\end{proof}

\begin{definition}
A hypergraph $\mathscr{H} = (\mathscr{V},\mathscr{E})$, with $\mid \mathscr{E} \mid = n$, is called \textbf{weakly dense}, if no integer $k$ in the half-open interval $[2,\sqrt{n})$ is the degree of more than $k^2$  vertices; that is, for all $k \in[2,\sqrt{n})$, there are at most $k^2$ vertices with degree $k$.
\end{definition}

\begin{theorem}[The EFL Conjecture for Weakly Dense Hypergraphs]\label{EFL}
Let $\mathscr{H}$ be a linear $n$-uniform hypergraph with $n$ edges. If $\mathscr{H}$ is weakly dense, then $\chi(\mathscr{H})=n$.
\end{theorem}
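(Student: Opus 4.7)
The plan is to reduce Theorem~\ref{EFL} to Theorem~\ref{Coloring} by coloring the high-degree core of $\mathscr{H}$ first and extending to the low-degree vertices using the weak density hypothesis. Partition $\mathscr{V}$ into $H = \{v : d(v) \geq \sqrt{n}\}$ and $L = \mathscr{V}\setminus H$, and split $L = \bigcup_{k=1}^{\lceil\sqrt{n}\rceil-1} L_k$ with $L_k = \{v : d(v) = k\}$; weak density gives $|L_k| \leq k^2$ for $k \in [2, \sqrt{n})$. The restricted sub-hypergraph $\mathscr{H}' = (H, \{E\cap H : E \in \mathscr{E},\ E\cap H\neq\emptyset\})$ is linear, has at most $n$ edges of size at most $n$, and satisfies $\delta(\mathscr{H}') \geq \sqrt{n}$, so by Theorem~\ref{Coloring} it admits an $n$-coloring $c \colon H \to \{0,\ldots,n-1\}$.

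To extend $c$ to $L$, for each $E\in\mathscr{E}$ set $A(E) = \{0,\ldots,n-1\}\setminus c(E\cap H)$, so that $|A(E)| = n - |E\cap H| = |E\cap L|$; by Lemma~\ref{Lemma 1}(i) applied to $\mathscr{H}'$, $|A(E)| \geq n - \sqrt{n} - 1$. For $v \in L$, let $\mathrm{Avail}(v) = \bigcap_{E\ni v} A(E)$; the crude union bound gives $|\mathrm{Avail}(v)| \geq n - d(v)(\sqrt{n}+1) \geq 1$ whenever $d(v) < \sqrt{n}$. I would then process $L$ greedily in decreasing order of degree, assigning each $v$ a free color in $\mathrm{Avail}(v)$ that avoids the colors of previously-processed neighbors. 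When coloring $v \in L_k$, the colored neighbors of $v$ lie in $H\cup L_{\geq k}$: the $H$-contribution is at most $k(\sqrt{n}+1)$, since $v$'s $k$ edges pairwise intersect only at $v$ and each contains at most $\sqrt{n}+1$ vertices of $H$; and the $L_{\geq k}$-contribution is controlled by combining the linearity identity $\sum_{u \in E} d(u) \leq 2n - 1$ (each edge other than $E$ meets $E$ in at most one vertex) with the weak density bounds $|L_j| \leq j^2$, yielding the per-edge inequality $|E\cap L_{\geq k}| \leq (2n-1)/k$ and the global bound $|L_{\geq k}| \leq \sum_{j=k}^{\lceil\sqrt{n}\rceil-1} j^2$.

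The hard part will be making this count tight enough to guarantee a free color throughout the greedy process. When $k$ is close to $\sqrt{n}$ the list $\mathrm{Avail}(v)$ can shrink to a single color, while when $k$ is small the $L_{\geq k}$-neighborhood of $v$ is large; in both regimes a naive greedy argument risks the colored neighbors spanning all $n$ colors. The crux is to weigh the per-edge bound $(2n-1)/k$ against the global bound $k^2$ so that, cumulatively over $v$'s $k$ edges, the total number of distinct colors already used on neighbors of $v$ is strictly less than $n$. If the direct greedy does not close cleanly, a natural fallback is a Hall-type list-coloring argument on $L_{\geq 2}$, whose total size is at most $\sum_{k=2}^{\lceil\sqrt{n}\rceil-1} k^2$ by weak density, verifying Hall's condition against the $\mathrm{Avail}$ lists; once $L_{\geq 2}$ is properly colored, each remaining $L_1$-vertex lies in a single edge whose $n-1$ other vertices are already colored, so at least one color in $A(E)$ is always left for it, completing the $n$-coloring of $\mathscr{H}$.
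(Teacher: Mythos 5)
Your skeleton is exactly the paper's: the same three-tier split of $\mathscr{V}$ by degree ($\geq\sqrt{n}$, in $[2,\sqrt{n})$, equal to $1$), the same appeal to Theorem \ref{Coloring} for the high-degree core, a greedy pass over the middle tier in decreasing order of degree, and the trivial finish for degree-one vertices. But the one step where weak density actually does work --- showing that when you reach a vertex $v_r$ of degree $k\in[2,\sqrt{n})$, fewer than $n$ of its neighbors are already colored --- is precisely the step you leave open, and the bounds you do write down cannot be made to close. Your $H$-contribution bound $k(\sqrt{n}+1)$ already reaches $n-1$ when $k=\sqrt{n}-1$, leaving no room for any $L$-neighbors; and at the other end, for $k=2$ your per-edge bound $|E\cap L_{\geq k}|\leq (2n-1)/k$ is about $n$ per edge and your global bound $\sum_{j\geq k}j^2$ is of order $n^{3/2}$, both far above the budget of $n$ colors. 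The Hall-type fallback is only named, not verified, so the proposal as written does not prove the theorem.

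The missing idea is a single unified edge-count, in the style of Sanchez-Arroyo, that does not separate $H$-neighbors from $L$-neighbors. Fix $v_r$ with $d(v_r)=k$ and an incident edge $E_j$, and let $i_j$ be the number of vertices of $E_j\setminus\{v_r\}$ of degree exactly $k$. Since only vertices of degree $\geq k$ have been colored so far, the colored vertices of $E_j$ are at most the $i_j$ degree-$k$ vertices plus the vertices of degree $>k$. Each vertex $v\in E_j$ with $d(v)>k$ lies on at least $d(v)-1\geq k$ edges that avoid $v_r$ and avoid every degree-$k$ vertex of $E_j$, and by linearity these edge sets are pairwise disjoint as $v$ varies; since only $(n-k)-i_j(k-1)$ edges avoid $v_r$ and all the degree-$k$ vertices of $E_j$, there are at most $\bigl((n-k)-i_j(k-1)\bigr)/k$ such $v$. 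Summing $i_j+\bigl((n-k)-i_j(k-1)\bigr)/k$ over the $k$ edges through $v_r$ gives exactly $n-k+i/k$ colored neighbors, where $i=\sum_j i_j$; weak density gives $i<k^2$ (there are at most $k^2$ vertices of degree $k$ and $v_r$ is one of them), hence $n-k+i/k<n$ and a free color exists. Without this combined count --- where each high-degree neighbor is charged $k$ edges and each degree-$k$ neighbor is charged against the $k^2$ budget --- the greedy step does not go through.
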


\begin{proof}
Denote $\mathscr{H} = (\mathscr{V},\mathscr{E})$, and let $\mathscr{V} = \mathscr{V}_1 \cup \mathscr{V}_2 \cup \mathscr{V}_3$, where $\mathscr{V}_1 = \{v\in \mathscr{V}: d(v)\geq \sqrt{n}\}$; $\mathscr{V}_2 = \{v\in\mathscr{V}:2\leq d(v)<\sqrt{n}\}$; and $\mathscr{V}_3 = \{v\in \mathscr{V}: d(v) = 1\}$. 
We will construct an $n$-coloring $f$ of $\mathscr{H}$ as follows. 
Define $\mathscr{E}_1 = \{E\cap\mathscr{V}_1: E\in\mathscr{E}$, and $E\cap \mathscr{V}_1\neq\varnothing\}$, and let $\mathscr{H}_1 = (\mathscr{V}_1,\mathscr{E}_1)$. Note that $\mathscr{H}_1$ is a linear hypergraph with at most $n$ edges, each with at most $n$ vertices, such that $d(v)\geq \sqrt{n}$, for all $v\in\mathscr{V}_1$. By Theorem \ref{Coloring}, $\mathscr{H}_1$ admits an $n$-coloring $f_1$. Note that $f_1$ is also an $n$-coloring of  $\mathscr{V}_1$ in $\mathscr{H}$. Define $f(v) = f_1(v)$, for all $v\in \mathscr{V}_1$. We now define $f(v)$, for all $v\in\mathscr{V}_2$, as follows. Denote $\mathscr{V}_2 = \{v_1,\ldots,v_s\}$, where $d(v_i)\geq d(v_j)$ if $i< j$. Let $1\leq r\leq s$, and suppose that $v_1,\ldots,v_{r-1}$ have been colored (i.e., $f(v_1),\ldots,f(v_{r-1})$ have been defined). Let $E_1,\ldots,E_{d(v_{r})}$ be the edges of $\mathscr{E}$ that are incident to $v_r$. For each $1\leq j\leq d(v_r)$, let $i_j$ be the number of vertices in $E_j\setminus \{v_r\}$ with degree $d(v_r)$. Let $i=i_1+\cdots+i_{d(v_r)}$. Suppose that $d(v)>d(v_r)$, for some $v\in E_j$. Note that there is one edge (namely, $E_j$) that contains $v$, as well as all vertices in $E_j$ of degree $d(v_r)$, and there are $d(v) - 1$ ($\geq d(v_r)$) edges that contain $v$ but not any of the vertices of $E_j$, of degree $d(v_r)$. These $d(v) - 1$ edges are among the $(n-d(v_r))-i_j(d(v_r) - 1)$ edges not containing any of the vertices of $E_j$, of degree $d(v_r)$. Therefore, the number of vertices in $E_j$, of degree larger than $d(v_r)$ is at most
\[\dfrac{(n-d(v_r)) - i_j (d(v_r) - 1)}{d(v_r)}.\] 
It follows that the number of vertices in $E_j$ that have already been colored is at most
\[i_j + \dfrac{(n-d(v_r)) - i_j (d(v_r) - 1)}{d(v_r)},\]
and the number of vertices of $\mathscr{V}$, adjacent to $v_r$, that have been colored is at most
\begin{align*}
&\sum\limits_{j=1}^{d(v_r)} \left[[i_j + \dfrac{(n-d(v_r)) - i_j (d(v_r) - 1)}{d(v_r)}\right] =\\
&\sum\limits_{j=1}^{d(v_r)} i_j + \sum\limits_{j=1}^{d(v_r)}\dfrac{n-d(v_r)}{d(v_r)} - \sum\limits_{j=1}^{d(v_r)}\dfrac{i_j  (d(v_r)-1)}{d(v_r)} = \\
& i+ (n-d(v_r)) - \dfrac{i(d(v_r) - 1)}{d(v_r)} =\\
& n-d(v_r) + \dfrac{i}{d(v_r)}.
\end{align*}
Since $\mathscr{H}$ is weakly dense, $i<d^2(v_r)$, and thus, $n-d(v_r)+\dfrac{i}{d(v_r)}<n$. This implies that the number of vertices adjacent to $v_r$ that have been colored is less than $n$. Define $f(v_r)$ as one of the remaining colors. By recurrence, we can color all vertices in $\mathscr{V}_2$. Finally, we color the vertices of $\mathscr{V}_3$ as follows. For each $E\in\mathscr{E}$, let $k_E$ be the number of vertices of $E$ that have been colored. If a vertex $v\in E$ has not been colored, it means that $E$ is the only edge incident to $v$. Hence, the $n-k_E$ vertices of $E$ not yet colored can be arbitrarily colored with the $n-k_E$ remaining colors. Thus, $f$ is an $n$-coloring of $\mathscr{H}$ and, since each edge contains $n$ vertices, $\chi(\mathscr{H})=n$.
\end{proof}

\noindent \textbf{Acknowledgements}: A big thanks to my dear wife Danisa for her support, and for typing this paper. In these excruciating times, her words of encouragement have been an invaluable source of strength.

\end{document}